 \newtheorem{thm}{Theorem}[section]
\newtheorem{theorem}[thm]{Theorem}
\newtheorem{proposition}[thm]{Proposition}
\newtheorem{rmk}[thm]{Remark}
\newtheorem{definition}[thm]{Definition}
\newtheorem{corollary}[thm]{Corollary}
\newtheorem{problem}[thm]{Problem}
\newcommand{\ZZ}{\mathbb{Z}}
\newcommand{\CC}{\mathbb{C}}
\newcommand{\RR}{\mathbb{R}}
\newcommand{\PP}{\mathbb{P}}
\title{Cohomologous symplectic forms with different Gromov widths}
\author{Shengzhen Ning}
\date{}
\begin{document}

\maketitle
\begin{abstract}
    We study McDuff-Salamon's Problem 46 in \cite{MSbook} by showing that there exist closed manifolds of dimension $\geq 6$ admitting cohomologous symplectic forms  with different Gromov widths. The examples are motivated by Ruan's early example (\cite{Ruan3fold}) of deformation inequivalent symplectic forms in dimension $6$ distinguished by Gromov-Witten invariants. To find cohomologous symplectic forms and compare their Gromov width, we make use of Li-Liu's theorem of symplectic cone for manifolds with $b_2^+=1$ and Biran's ball packing theorem in dimension $4$. Along the way, we also show that these cohomologous symplectic forms can have distinct first Chern classes, which answers another question by Salamon in \cite{Salamonsurvey}.
\end{abstract}
\section{Introduction}
 The Gromov width of a $2n$-dimensional symplectic manifold $(M,\omega)$ is defined by
 \[w_G(M,\omega)=\sup\{\pi r^2\,|\,B^{2n}(r)\text{ can be symplectically embedded into }(M,\omega)\},\]
 where $B^{2n}(r)$ denotes the open ball in $\RR^{2n}$ of radius $r$ equipped with the standard symplectic form $\sum_{i=1}^ndx_i\wedge dy_i$.
 Problem 46 in the open problem chapter of \cite{MSbook} asks the following.
\begin{problem}
 Is there a closed manifold $M$ with cohomologous symplectic forms $\omega_0,\omega_1$ such that $(M,\omega_0)$ and $(M,\omega_1)$ have different Gromov widths?
\end{problem}



Another problem related to cohomologous symplectic forms on a fixed closed manifold, raised in \cite{Salamonsurvey}, concerns their first Chern classes. In dimension $4$, it is a consequence of Seiberg-Witten theory that cohomologous symplectic forms must have the same first Chern class (\cite[Corollary A]{Salamonsurvey}). In higher dimensions, one can ask the following question.

\begin{problem}[\cite{Salamonsurvey} Discussion 4.6]\label{prob:Salamon}
Do cohomologous symplectic forms on a closed manifold of dimension $\geq 6$ always have the same first Chern class?
\end{problem}

The aim of this note is to provide examples to both of the above problems in dimension $\geq 6$.

\begin{theorem}\label{thm:main}
    Let $X$ be any symplectic $4$-manifold homeomorphic but not diffeomorphic to the rational manifold $\CC\PP^2\#k\overline{\CC\PP^2}$ with $k\geq 1$. Then there exists some symplectic form $\omega_X$ on $X$ and $\omega_{S^2}$ on $S^2$ such that the $6$-manifold $M:=X\times S^2$ admits symplectic form cohomologous to the product symplectic form $\omega_X\oplus \omega_{S^2}$ but with smaller Gromov width. Moreover, the first Chern classes of these cohomologous symplectic forms are different.
\end{theorem}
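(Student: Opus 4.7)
Write $Y := \CC\PP^2\#k\overline{\CC\PP^2}$. The strategy is to identify $M = X\times S^2$ smoothly with $Y\times S^2$, transport a product symplectic form from the $Y$-side back to $M$ via this diffeomorphism, and arrange the result to be cohomologous to a product $\omega_X\oplus\omega_{S^2}$ on the $X$-side. The different smooth structures on $X$ and $Y$ then force the two cohomologous forms on $M$ to have different Gromov widths and different first Chern classes.

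\emph{Step 1 (stabilization and cohomology matching).} Since $X$ and $Y$ are simply-connected homeomorphic (Freedman), the products $X\times S^2$ and $Y\times S^2$ are simply-connected closed smooth $6$-manifolds sharing cohomology ring and characteristic classes, so the Wall-Jupp-Zhubr classification of smooth simply-connected $6$-manifolds (or equivalently Wall's $4$-dimensional stabilization theorem, with the $S^2\times S^2$-summands absorbed into the $S^2$-factor) yields a diffeomorphism $\phi\colon M\xrightarrow{\cong} Y\times S^2$. Next, using Li-Liu's symplectic cone theorem, pick $\omega_Y$ on $Y$ in a class $\lambda H - \sum_{i=1}^k a_i E_i$ and an area form $\omega_{S^2}'$ on $S^2$. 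Since $X$ is simply-connected, Künneth gives $H^2(M;\RR) = H^2(X;\RR) \oplus H^2(S^2;\RR)$; decompose $\phi^*([\omega_Y]+[\omega_{S^2}']) = \alpha+\beta$ accordingly. Exploiting the openness of the Li-Liu cones on both sides and the positivity of $[\omega_{S^2}]$, tune the $(k+2)$-parameter family $(\lambda, a_i, [\omega_{S^2}'])$ so that $\alpha$ lies in the Li-Liu cone of $X$ and $\beta$ is positive. Setting $\omega := \omega_X\oplus\omega_{S^2}$ (realizing $\alpha+\beta$) and $\omega' := \phi^*(\omega_Y\oplus\omega_{S^2}')$ produces two cohomologous symplectic forms on $M$.

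\emph{Step 2 (different Gromov widths).} Since $\phi$ is a diffeomorphism, $w_G(M,\omega') = w_G(Y\times S^2,\omega_Y\oplus\omega_{S^2}')$. The surface $E_i\times\{pt\}$ is an embedded symplectic $S^2$ of area $a_i$ coming from the symplectic exceptional divisor $E_i\subset Y$; by Biran's ball packing theorem applied to $(Y,\omega_Y)$ (the presence of a symplectic $-1$-sphere of small area $a_i$ obstructs large ball packings) combined with a product non-squeezing argument, $w_G(M,\omega')\leq a_i$. On the other hand, the exotic smooth structure on $X$ prevents the class $E_i$ (identified via the homeomorphism) from being represented by any symplectic $-1$-sphere on $X$, by Seiberg-Witten constraints and Li-Liu's description of the symplectic cone. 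Hence Biran's $4$-dimensional packing on $X$ gives $w_G(X,\omega_X) > a_i$ for our parameter choice, and the standard product ball embedding $B^6(r)\subset B^4(r)\times B^2(r)\hookrightarrow X\times S^2$ (valid for $\pi r^2 < \min(w_G(X,\omega_X),\int\omega_{S^2})$) lifts this to $w_G(M,\omega) > a_i \geq w_G(M,\omega')$.

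\emph{Step 3 (different Chern classes and main obstacle).} We have $c_1(M,\omega) = c_1(X,\omega_X) + c_1(S^2,\omega_{S^2})$ and $c_1(M,\omega') = \phi^*(c_1(Y,\omega_Y) + c_1(S^2,\omega_{S^2}'))$. By Taubes's theorem, $-c_1(X,\omega_X) = K_X$ is a Seiberg-Witten basic class of the exotic $X$ and $-c_1(Y,\omega_Y) = K_Y$ is the canonical class of the rational $Y$; these are distinct smooth invariants, and this distinction is preserved under $\phi^*$ to yield $c_1(M,\omega)\neq c_1(M,\omega')$. The most technically delicate piece is \emph{Step 2}: the upper bound $w_G(M,\omega')\leq a_i$ requires a careful product-non-squeezing argument using the small symplectic $-1$-sphere on $Y$ (a naive $J$-holomorphic-curve bound in dimension six is not immediate), and the matching lower bound $w_G(M,\omega) > a_i$ requires the $4$-dimensional Biran gain on the exotic $X$ to survive multiplication by $S^2$, which in turn constrains the choice of $\omega_{S^2}$-area in Step 1. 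The cohomology matching in Step 1 and the Chern class distinction in Step 3 are, by comparison, straightforward once Steps 2's quantitative choices are available.
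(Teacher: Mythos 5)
Your overall strategy (identify $X\times S^2$ with $(\CC\PP^2\#k\overline{\CC\PP^2})\times S^2$, pull back a product form, and arrange cohomologous forms whose widths are controlled on the two sides by $4$-dimensional geometry) is the same as the paper's, but Step 2 contains a genuine gap, and its central claim is based on a mechanism that is false. A symplectic $-1$-sphere of small area $a_i$ does \emph{not} bound the Gromov width from above by $a_i$: already for $\CC\PP^2\#\overline{\CC\PP^2}$ with line area $a$ and exceptional area $b_1$ small, Biran's formula gives $w_G=a-b_1$, which is large, not $\leq b_1$. (Note also that classes $E$ with $c_1(E)=1$ contribute $\omega(E)/(c_1(E)-1)=\infty$ to $d'_\omega$, so they impose no constraint at all.) The correct obstruction on the rational side is the \emph{fiber class} $A=H-E_1$, which is a uniruled class with $A^2=0$; Gromov's theorem gives $w_G\leq\omega'(A)$, and --- this is the step you flag but do not resolve --- the bound survives multiplication by $S^2$ precisely because uniruledness is stable under taking products with $(S^2,\omega_{S^2})$ (Proposition \ref{prop:uniruledindim6}), not because of any ``product non-squeezing argument,'' which does not exist in this generality. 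One then still has to arrange $\omega'(A)<\sqrt{[\omega']^2}$, which requires putting $[\omega']$ in reduced form (Proposition \ref{prop:upperbound}); this quantitative input is absent from your plan.

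Two further points. In Step 1, an arbitrary diffeomorphism from Wall/Jupp need not respect the K\"unneth splitting, and verifying that your class $\alpha$ lies in the Li--Liu cone of $X$ requires knowing $\alpha\cdot E_i>0$ for the exceptional classes $E_i$ of the \emph{exotic} $X$; the paper handles this by using Jupp's realization theorem to build a diffeomorphism whose induced map is $\tilde{\phi}\oplus\mathrm{Id}$ with $\tilde{\phi}(\mathrm{PD}(E_i'))=\mathrm{PD}(E_i)$, after passing to the minimal model. Your ``tune the parameters'' step silently assumes this control. In Step 3, asserting that $K_X$ and $K_Y$ are ``distinct smooth invariants preserved under $\phi^*$'' is not an argument (and Taubes's basic-class statement requires $b_2^+>1$, which fails here). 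The paper's proof is an actual obstruction: if the Chern classes agreed, then $[\omega_X]\cdot K_{\omega_X}=[\omega']\cdot K_{\omega'}<0$, and Liu--Ohta--Ono would force $X$ to be rational or ruled, contradicting that $X$ is exotic.
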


    Symplectic $4$-manifolds $X$ satisfying the conditions in Theorem \ref{thm:main} are quite abundant in the realm of exotic $4$-manifolds, even under the minimality assumption. For example, see \cite{Kotschick} for $k=8$, \cite{Park7} for $k=7$, \cite{SS6} for $k=6$, \cite{PSS5} for $k=5$, \cite{RU4} for $k=4$, \cite{AP3,BK3} for $k=3$ and \cite{AP2pt} for $k=2$. Note that $k=2$ is the current world record for smallest $k$. We thus have the following immediate corollary.

\begin{corollary}
    For any $k\geq 2$, the smooth $6$-manifold $M:=(\CC\PP^2\#k\overline{\CC\PP^2})\times S^2$ admits cohomologous symplectic forms with different Gromov widths and different first Chern classes.
\end{corollary}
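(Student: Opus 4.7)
\emph{Proof plan.} Let $Y := \CC\PP^2\#k\overline{\CC\PP^2}$ and denote by $H, E_1,\dots,E_k$ the standard basis of $H^2(Y;\RR)$. The strategy is to realize a second symplectic form on $M = X\times S^2$ by pulling back a product form from $Y\times S^2$. First, since $X$ and $Y$ are homeomorphic simply connected $4$-manifolds, they are $h$-cobordant; crossing such an $h$-cobordism with $S^2$ gives a simply connected $h$-cobordism between $X\times S^2$ and $Y\times S^2$ of dimension $7$, which is a smooth product by the high-dimensional $h$-cobordism theorem. Fix such a diffeomorphism $\Phi\colon X\times S^2 \to Y\times S^2$, and set $\omega':=\Phi^*(\omega_Y\oplus \omega_{S^2}')$ for a K\"ahler form $\omega_Y$ on $Y$ and area forms $\omega_{S^2}, \omega_{S^2}'$ on $S^2$ to be determined.

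To arrange cohomologous forms, I would invoke Li--Liu's symplectic cone theorem. Take $[\omega_Y] = aH - \sum_{i=1}^k b_i E_i$ with $a > b_i > 0$ (K\"ahler on $Y$) and $[\omega_{S^2}']$ any positive area class. The pullback $\Phi^*[\omega_Y\oplus \omega_{S^2}']$ decomposes in $H^2(M;\RR) = H^2(X;\RR)\oplus H^2(S^2;\RR)$ as $\eta_X + \eta_{S^2}$. Since the symplectic cone of $X$ is an open subset of the positive cone by Li--Liu, tuning $a, b_i$ and $[\omega_{S^2}']$ allows me to arrange $\eta_X$ in the symplectic cone of $X$ (so that $\eta_X$ is realized by some $\omega_X$) and $\eta_{S^2}$ a positive area class (realized by $\omega_{S^2}$), yielding cohomologous forms $\omega_X\oplus\omega_{S^2}$ and $\omega'$ on $M$.

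To compare Gromov widths, note that $Y$ is a blow-up of a Hirzebruch surface and admits, for a generic $\omega_Y$-compatible $J$, a pseudoholomorphic $S^2$-fibration with fibers of area $a - b_j$ for a chosen $j$. Crossed with the identity on $S^2$, this foliates $Y\times S^2$ by pseudoholomorphic spheres of the same area through every point, so Gromov's non-squeezing theorem forces
\[
w_G(M,\omega') \;=\; w_G(Y\times S^2, \omega_Y\oplus \omega_{S^2}') \;\le\; \pi(a - b_j),
\]
which is arbitrarily small as $b_j \to a$. By contrast, since $X$ is minimal (the standing assumption in the cited exotic examples), no such small-area pseudoholomorphic sphere exists on $X$, and Biran's ball-packing theorem in dimension $4$ embeds a $4$-ball in $(X,\omega_X)$ of capacity approaching the symplectic volume. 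Taking a product with a small $2$-disk in $(S^2,\omega_{S^2})$ yields an embedded round $B^6\hookrightarrow M$ of capacity bounded below uniformly as $b_j \to a$, exceeding $\pi(a-b_j)$ and so giving $w_G(M,\omega_X\oplus\omega_{S^2}) > w_G(M,\omega')$.

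For the distinct first Chern classes, one has $c_1(\omega_X\oplus\omega_{S^2}) = c_1(TX) + c_1(TS^2)$ and $c_1(\omega') = \Phi^*(c_1(TY) + c_1(TS^2))$. By Taubes' theorem, $\pm K_{\omega_X}$ and $\pm K_{\omega_Y}$ are Seiberg--Witten basic classes of the smooth $4$-manifolds $X$ and $Y$ respectively, and the difference between these SW basic class sets (which precisely witnesses the non-diffeomorphism of $X$ and $Y$) prevents any diffeomorphism of $M$ from identifying the two first Chern classes. The main obstacle is the quantitative comparison in the third paragraph: ensuring that the Biran packing lower bound on $(X,\omega_X)$ genuinely exceeds the ruled-fiber upper bound on $(Y,\omega_Y)$, within the constraints imposed by the Li--Liu cohomology matching.
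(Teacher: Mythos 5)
Your overall architecture matches the paper's: pull back a product form from the rational side via a diffeomorphism of the $6$-manifolds, bound the Gromov width from above on the rational side by the area of a fiber-type uniruled class (stabilized to dimension $6$), and from below on the exotic side by Biran's full one-ball packing. However, two steps as written contain genuine gaps. First, the quantitative comparison --- which you correctly flag as the main obstacle --- does not work in the regime you propose. If the two forms are cohomologous then $[\omega_X]^2=[\omega_Y]^2=a^2-\sum_i b_i^2$, so as $b_j\to a$ the Biran lower bound $\sqrt{[\omega_X]^2}$ tends to $0$ (and the class leaves the positive cone entirely); it is \emph{not} ``bounded below uniformly.'' The correct resolution is the explicit inequality $\omega_Y(H-E_1)<\sqrt{[\omega_Y]^2}$, i.e.\ $2b_1(a-b_1)-\sum_{i\geq 2}b_i^2>0$, which Proposition \ref{prop:upperbound} establishes under the reduced condition for $k\leq 4$ and, for $k\geq 5$, only after a deformation shrinking $b_5,\dots,b_k$. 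Relatedly, to get a round $B^6$ inside $B^4(r)\times D^2$ you need the disk (hence the $S^2$ factor) to have area at least $\pi r^2$; the paper takes $\omega_{S^2}$ of \emph{large} area, whereas your ``small $2$-disk'' would destroy the lower bound.

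Second, the Chern class argument via Seiberg--Witten basic classes does not go through. The basic classes of $X$ and $Y$ are invariants of the $4$-manifolds, not of $M$, and the question is only whether the lattice isometry $\tilde{\phi}$ induced by the chosen diffeomorphism can carry $K_{\omega_Y}$ to $K_{\omega_X}$; both are characteristic vectors of square $9-k$, so no algebraic obstruction of the kind you invoke exists (and for $b_2^+=1$ the SW invariants are chamber-dependent anyway). The paper instead arranges $[\omega']\cdot K_{\omega'}<0$ (again via Proposition \ref{prop:upperbound}) and argues that equality of Chern classes would force $[\omega_X]\cdot K_{\omega_X}<0$, contradicting the Liu--Ohta--Ono theorem since $X$ is not rational or ruled. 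Finally, a smaller but real point: you take an arbitrary diffeomorphism from the $h$-cobordism theorem and assert that the pulled-back class splits as $\eta_X+\eta_{S^2}$ with $\eta_X$ in the symplectic cone of $X$. The paper controls this by invoking Jupp's classification (Corollary \ref{cor:jupp}) to \emph{construct} a diffeomorphism realizing a prescribed isometry $\phi\oplus\mathrm{Id}$, and then verifies $\eta_X\in\mathcal{C}_{K_+}\cup\mathcal{C}_{K_-}$ by checking positivity on the exceptional classes of $X$ via Theorem \ref{thm:liliucone} and equation (\ref{equation:-1}); this verification is needed and is not automatic from ``tuning $a,b_i$.''
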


    As we will see in the proof, based on the examples in Theorem \ref{thm:main}, it is easy to construct cohomologous symplectic forms with different Gromov widths and Chern classes in any dimension $\geq 6$.


For Problem \ref{prob:Salamon}, the examples in Theorem \ref{thm:main} are not the only candidates. Indeed, $K3\times S^2$ also works. One can use various symplectic homotopy K3 surfaces such as the ones constructed in \cite{Gompf,FS,VidussiK3}, which have non-vanishing first Chern classes. The knot surgery manifolds $E(n)_K$ of the elliptic surface $E(n)$ have been investigated in \cite{IP99}. They showed that, for two fibered knots $K,K'$ with the same genus, the diffeomorphic $6$-manifolds $E(n)_K\times S^2$ and $E(n)_{K'}\times S^2$ can have deformation inequivalent cohomologous symplectic forms with the same first Chern class. This was later generalized in \cite{Herrera} into the $S^2$-bundle settings. Rather than comparing symplectic forms coming from different knots $K,K'$ as \cite{IP99}, one can actually directly compare the symplectic form constructed from $E(2)_K\times S^2$ directly with the product symplectic form on $K3\times S^2$. See the discussion after the proof of Theorem \ref{thm:main} in the last section.

\section*{Acknowledgement}
The author would like to thank his advisor Tian-Jun Li for helpful discussions and comments during the preparation of this manuscript.

\section{Dimension $4$}

\subsection{Exceptional classes and minimal models}
For a symplectic $4$-manifold $(X,\omega)$, let $\mathcal{E}_X$ (resp. $\mathcal{E}_{\omega}$) be the set of classes in $H_2(X;\ZZ)$ which can be represented by smoothly (resp. symplectically) embedded sphere of self-intersection $-1$. A deep result coming from Taubes-Seiberg-Witten theory is that $\mathcal{E}_X$ is empty if and only if $\mathcal{E}_{\omega}$ is empty (\cite{Taubes},\cite{LiLiuruled},\cite{Li99}). In this case, $(X,\omega)$ is said to be {\bf minimal}. 

Assume $(X,\omega)$ is minimal and not diffeomorphic to a rational or ruled manifold. By the symplectic blowup construction, the smooth manifold $\tilde{X}:=X\#l\overline{\CC\PP^2}$ also admits symplectic forms. Let $\tilde{\omega}$ be any symplectic form on $\tilde{X}$. In general, it is unknown whether $\tilde{\omega}$ is symplectomorphic to a symplectic form coming from the symplectic blowup construction. However, by \cite[Corollary 3]{Li99}, we always have 
\begin{equation}\label{equation:-1}
    \mathcal{E}_{\tilde{\omega}}=\{E_1,\cdots,E_l\},\,\,\mathcal{E}_{\tilde{X}}=\{\pm E_1,\cdots,\pm E_l\}
\end{equation}
where $E_i$'s are the line classes in those $l\overline{\CC\PP^2}$. In particular, $E_i\cdot E_j=0$ for any $i\neq j$. By standard pseudoholomorphic curve technique, the classes in $\mathcal{E}_{\tilde{\omega}}$ can be represented by disjoint embedded $J$-holomorphic exceptional spheres for generic $\tilde{\omega}$-tame almost complex structure $J$. Moreover, the space of the symplectic representatives of any class in $\mathcal{E}_{\tilde{\omega}}$ is connected. By performing symplectic blowdown operations on those exceptional spheres, one can obtain the {\bf minimal model} for $(\tilde{X},\tilde{\omega})$. Indeed, (\ref{equation:-1}) is closely related to the uniqueness of minimal models for symplectic $4$-manifolds not diffeomorphic to rational or ruled manifolds.

\begin{theorem}[\cite{McDuffrationalruled,Mcduffimmersed}]\label{thm:minimalmodel}
Any symplectic $4$-manifold has a minimal model. When $X$ is not diffeomorphic to rational or ruled manifolds, the symplectomorphism type of the minimal model is unique.
\end{theorem}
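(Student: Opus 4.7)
The plan is to establish existence by iterated blowdown and uniqueness (in the non-rational, non-ruled case) via the structural description of $\mathcal{E}_\omega$ recorded in equation (\ref{equation:-1}). For existence, I would induct on $b_2(X)$: if $\mathcal{E}_\omega = \emptyset$ there is nothing to do; otherwise pick any $E \in \mathcal{E}_\omega$, realize it by a symplectically embedded $(-1)$-sphere (which exists by definition), and perform a symplectic blowdown to obtain $(X',\omega')$ with $b_2(X') = b_2(X) - 1$. The sequence terminates in a minimal model after finitely many steps.

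For uniqueness under the non-rational/non-ruled assumption, I would first build a canonical minimal model. By (\ref{equation:-1}), $\mathcal{E}_\omega = \{E_1,\ldots,E_l\}$ with $E_i \cdot E_j = 0$ for $i \neq j$, and for generic $\omega$-tame $J$ all of these classes are simultaneously represented by pairwise disjoint embedded $J$-holomorphic $(-1)$-spheres $C_1,\ldots,C_l$. Simultaneously blowing down this configuration produces a symplectic $4$-manifold $(X_0,\omega_0)$, which is automatically minimal since any class in $\mathcal{E}_{\omega_0}$ would lift to a class in $\mathcal{E}_\omega$ outside $\{E_1,\ldots,E_l\}$, contradicting (\ref{equation:-1}). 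The connectedness of the space of symplectic representatives of each $E_i$, promoted by a parametric Gromov compactness argument along a path of generic almost complex structures, shows that the symplectomorphism type of $(X_0,\omega_0)$ is independent of all choices.

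To complete uniqueness, I need every minimal model obtained from an arbitrary sequence of single blowdowns to be symplectomorphic to $(X_0,\omega_0)$. I would induct on $l$: after a single blowdown of $(X,\omega)$ along a symplectic representative of some $E_j$, the resulting $(X',\omega')$ satisfies $\mathcal{E}_{\omega'} = \{E_1,\ldots,\widehat{E_j},\ldots,E_l\}$ by reapplying (\ref{equation:-1}), and its canonical model agrees with $(X_0,\omega_0)$ because simultaneously blowing down the remaining $l-1$ spheres in $(X',\omega')$ is the same operation as blowing down all $l$ spheres in $(X,\omega)$. The inductive hypothesis then identifies any minimal model of $(X',\omega')$ with $(X_0,\omega_0)$.

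The main obstacle is the step asserting that the space of \emph{configurations} of $l$ disjoint symplectic exceptional spheres in the classes $\{E_1,\ldots,E_l\}$ is path-connected, and that symplectic blowdown descends an isotopy of such configurations to a symplectomorphism of the blowdowns. The first relies on the pseudoholomorphic techniques used to prove (\ref{equation:-1}) (producing a parametric family of disjoint $J_t$-holomorphic representatives along any path $J_t$), while the second uses the Weinstein neighborhood theorem for exceptional divisors to make the blowdown construction canonical up to symplectomorphism. The rational/ruled exclusion is essential precisely because (\ref{equation:-1}) fails there: such manifolds carry infinitely many distinct exceptional classes, so no finite list $\{E_1,\ldots,E_l\}$ can govern the minimal model up to symplectomorphism.
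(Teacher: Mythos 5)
The paper does not prove this statement: it is quoted verbatim as a theorem of McDuff (the citations \cite{McDuffrationalruled,Mcduffimmersed}), so there is no internal proof to compare against. Your outline is the standard argument for this result, and it is essentially correct in structure: existence by induction on $b_2$, and uniqueness via the finiteness and pairwise orthogonality of $\mathcal{E}_\omega$ from (\ref{equation:-1}) together with connectedness of the spaces of symplectic representatives. Two remarks on where the real content lies. First, guard against circularity in your use of (\ref{equation:-1}): as stated in the paper it applies to $\tilde X = X\#l\overline{\CC\PP^2}$ with $X$ \emph{already minimal}, so you must run the existence step first to know your manifold is smoothly a blowup of a minimal one, and you should note that Li's Corollary~3 is a Taubes--Seiberg--Witten statement proved independently of minimal-model uniqueness (otherwise your uniqueness proof assumes what it proves). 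Second, the two steps you flag as ``the main obstacle'' are not routine bookkeeping --- they are the theorem. Connectedness of the space of a \emph{single} exceptional representative (which the paper asserts) does not immediately give connectedness of the space of \emph{configurations} of $l$ disjoint representatives, nor does it by itself show that an arbitrary blowdown sequence, which may interleave non-holomorphic representatives in varying orders, lands in the same symplectomorphism type; making the parametric Gromov-compactness argument produce disjoint $J_t$-holomorphic representatives simultaneously for all $t$, and showing that blowdown descends isotopies of configurations to symplectomorphisms of the blowdowns, is precisely the technical work of \cite{Mcduffimmersed}. As a proposal your architecture is right and identifies the correct difficulties, but those two lemmas would need full proofs before this could be called a proof rather than a reduction to the literature.
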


Note that the minimal model depends on the choice of the symplectic form on the non-minimal manifold. As pointed out in \cite[Example 13.4.5]{MSbook}, unless the manifold is rational or ruled, different choice of symplectic forms might even potentially result in non-diffeomorphic manifolds after blowdown. This is because in general, it is unknown whether the diffeomorphism group acts transitively on the space of smoothly embedded exceptional spheres.



\subsection{Li-Liu's result of symplectic cones of $4$-manifolds with $b_2^+=1$}
Now, we review the result from \cite{LiLiu01}. For an oriented closed smooth $4$-manifold $X$ admitting symplectic structures, introduce the set consisting all the possible symplectic canonical classes  
\[\mathcal{K}_X:=\{K_{\omega}\,|\,\omega\text{ is a symplectic form on }X\}\subseteq H^2(X;\ZZ).\]
For any $K\in \mathcal{K}_X$, consider the {\bf $K$-symplectic cone}
\[\mathcal{C}_K:=\{[\omega]\,|\,\omega\text{ is a symplectic form on }X\text{ and }K_{\omega}=K\}\subseteq H^2(X;\RR),\]
and the set of {\bf $K$-exceptional classes}
\[\mathcal{E}_K:=\{E\in \mathcal{E}_X\,|\,E\cdot K=-1\}.\]
When $b_2^+(X)=1$, the {\bf positive cone} \[\mathcal{P}:=\{a\in H^2(X;\RR)\,|\,a^2>0\}\subseteq H^2(X;\RR)\]
will contain two connected components. When some $K\in\mathcal{K}_X$ is chosen, the connected component of $\mathcal{P}$ containing $\mathcal{C}_K$ will be called the {\bf forward cone associated to $K$}. We denote it by $\mathcal{FP}(K)$.
\begin{theorem}[\cite{LiLiu01}]\label{thm:liliucone}
    If $b_2^+(X)=1$ and $\mathcal{K}_X\neq \emptyset$, then
    \[\mathcal{C}_K=\{a\in \mathcal{FP}(K)\,|\,a\cdot E> 0\text{ for any }E\in\mathcal{E}_K\}.\]
\end{theorem}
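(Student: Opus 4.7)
The plan is to establish the two inclusions separately, with the reverse inclusion being substantially harder. Throughout, I will lean heavily on the Taubes-Seiberg-Witten identification between $\mathcal{E}_X$ and $\mathcal{E}_\omega$ recalled in the previous subsection.

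For the forward inclusion $\mathcal{C}_K \subseteq \{a \in \mathcal{FP}(K) : a \cdot E > 0 \text{ for all } E \in \mathcal{E}_K\}$, suppose $[\omega]\in\mathcal{C}_K$. Positivity of $\omega\wedge\omega$ as a volume form gives $[\omega]^2>0$, and $[\omega]\in\mathcal{FP}(K)$ holds by the very definition of the forward cone as the component of $\mathcal{P}$ containing $\mathcal{C}_K$. For a class $E\in\mathcal{E}_K$, the refined version of Taubes-SW (as in \cite{LiLiuruled,Li99}) guarantees that $E$ itself, rather than $-E$, is represented by an embedded $\omega$-symplectic sphere: the pairing $E\cdot K_\omega=-1$ picks out the correct sign via adjunction. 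Integrating $\omega$ over this representative yields $[\omega]\cdot E>0$.

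For the reverse inclusion, I would reduce to the minimal case. Given $a\in\mathcal{FP}(K)$ with $a\cdot E>0$ for every $E\in\mathcal{E}_K$, split into two subcases.

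\textbf{Minimal case.} When $X$ is minimal, $\mathcal{E}_X=\emptyset$, so $\mathcal{E}_K=\emptyset$ and the condition on $a$ reduces to $a\in\mathcal{FP}(K)$. One must exhibit, for every such $a$, a symplectic form in the class $a$ with canonical class $K$. The minimal closed $4$-manifolds with $b_2^+=1$ fall into three families: (i) $\mathbb{CP}^2$ and irrational ruled surfaces, where the symplectic cone is classical by McDuff's work; (ii) $S^2$-bundles over higher-genus surfaces; (iii) minimal Kähler surfaces of non-negative Kodaira dimension with $b_2^+=1$. In each case, start from a known Kähler (or symplectic) form with canonical class $K$ and sweep out $\mathcal{FP}(K)$ by Lalonde-McDuff symplectic inflation along appropriate SW-basic classes, combined with rescaling.

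\textbf{Non-minimal case.} Fix any $\omega_0\in\mathcal{C}_K$ and let $\{E_1,\dots,E_l\}=\mathcal{E}_{\omega_0}\subseteq\mathcal{E}_K$. By the structural results recalled in the previous subsection, the $E_i$ are represented by disjoint embedded $\omega_0$-exceptional spheres, and blowing them down produces a minimal model $(X_{\min},\omega_{\min})$ with canonical class $K_{\min}$ such that $K$ pulls back to $K_{\min}+\sum\mathrm{PD}(E_i)$. Define $a_{\min}\in H^2(X_{\min};\mathbb{R})$ so that its pullback is $a-\sum r_i\,\mathrm{PD}(E_i)$ where $r_i:=a\cdot E_i>0$; a short computation shows $a_{\min}\in\mathcal{FP}(K_{\min})$. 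By the minimal case, $a_{\min}\in\mathcal{C}_{K_{\min}}$, realized by some $\omega'_{\min}$. Now perform symplectic blow-ups of sizes $r_i$ at $l$ disjoint points, using the connectedness of the space of symplectic ball embeddings (or equivalently the connectedness of the space of symplectic representatives of each class in $\mathcal{E}_{\omega'_{\min}}$-type data) to assemble a symplectic form on $X$ in class $a$ with canonical class $K$.

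The main obstacle is the minimal subcase for manifolds of non-negative Kodaira dimension with $b_2^+=1$, where one cannot rely on the very flexible rational/ruled technology and must combine SW-theoretic constraints on basic classes with inflation to fill the entire forward cone. A secondary subtlety, which permits the blowdown-blowup scheme to close up, is verifying that the purely cohomological set $\mathcal{E}_K$ equals $\mathcal{E}_{\omega}$ for every $\omega\in\mathcal{C}_K$; this is again an appeal to the Taubes-SW package together with the wall-crossing behavior specific to $b_2^+=1$.
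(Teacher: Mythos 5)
First, note that the paper does not prove this statement at all: it is quoted verbatim from Li--Liu \cite{LiLiu01} as an external input, so there is no internal proof to compare your sketch against. Judged on its own, your proposal has the right skeleton --- the forward inclusion via the Taubes--Seiberg--Witten identification of $\mathcal{E}_K$ with symplectically represented exceptional classes is correct and standard, and the overall strategy of ``reduce to the minimal case, then fill the forward cone by inflation'' is indeed the shape of Li--Liu's argument --- but the two places where the real content lives both have genuine gaps.

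The first gap is in your minimal case. Your trichotomy is not a classification: minimal symplectic $4$-manifolds with $b_2^+=1$ and non-negative Kodaira dimension need not be K\"ahler (there are such manifolds with $b_1$ odd, for instance), so ``start from a known K\"ahler form and inflate'' has no starting point in general, and you explicitly defer exactly this case as ``the main obstacle.'' The actual argument of \cite{LiLiu01} avoids any classification: because $b_2^+=1$, the Seiberg--Witten invariant is chamber-dependent and the wall-crossing formula forces a nonzero invariant in at least one chamber for a large supply of characteristic classes; Taubes' SW$\Rightarrow$Gr then produces embedded symplectic surfaces in those classes, and Lalonde--McDuff inflation along them sweeps out the forward cone. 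That wall-crossing input is the essential idea and it is absent from your sketch except as a passing phrase at the very end, attached to a different issue. The second gap is in your non-minimal reduction: blowing down $\mathcal{E}_{\omega_0}=\{E_1,\dots,E_l\}$ and re-blowing up with sizes $r_i=a\cdot E_i$ presumes that these classes are pairwise orthogonal and exhaust $\mathcal{E}_K$. By the identity (\ref{equation:-1}) this holds when $X$ is not rational or ruled, but for rational and ruled manifolds $\mathcal{E}_K$ is typically infinite and its members are not pairwise disjoint (e.g.\ $E_1,E_2,H-E_1-E_2$ in $\CC\PP^2\#2\overline{\CC\PP^2}$), so the blowdown--blowup scheme does not close up there and one must argue differently (which is precisely where the infinitely many inequalities $a\cdot E>0$ in the statement come from). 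As written, the proposal is a plausible outline but not a proof.
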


In particular, when $X$ is minimal and $a\in H^2(X;\RR)$ satisfies $a^2>0$, we have $a\in \mathcal{C}_{K}\cup\mathcal{C}_{-K}$ for some\footnote{Actually, \cite{LiLiu01} also showed that $K$ is unique up to sign for minimal manifolds.} $K\in \mathcal{K}_X$. When $X$ is not minimal with $\mathcal{E}_X=\{\pm E_1,\cdots,\pm E_l\}$ and not diffeomorphic to rational or ruled manifolds, by Theorem \ref{thm:minimalmodel}, we can obtain its unique minimal model $\hat{X}$ by blowing down the exceptional spheres of the classes in $\mathcal{E}_X$. Choose any $K_{\hat{X}}\in\mathcal{K}_{\hat{X}}$ so that both $K_+:=K_{\hat{X}}+\sum \text{PD}(E_i)$ and $K_-:=-K_{\hat{X}}+\sum \text{PD}(E_i)$ are in $\mathcal{K}_X$. Assume $a\in H^2(X;\RR)$ satisfies $a^2>0$ and $a\cdot E_i>0$ for all $1\leq i\leq l$. Then it follows from the light cone lemma that $a\in \mathcal{C}_{K_+}\cup\mathcal{C}_{K_-}$.

\subsection{Biran's result of symplectic ball packings in dimension $4$}
Now, we review the result from \cite{biranpacking}, based on the pioneer work \cite{MP94}. Recall that a symplectic $4$-manifold $(X,\omega)$ is said to have {\bf SW-simple type} if its only nonzero Gromov invariants occur in classes $A\in H_2(X;\ZZ)$ for which $A^2-K_{\omega}\cdot A=0$. It is known (see \cite{Mcduffisotopy}) that the operation of blowup will preserve the property of being not of SW-simple type. Moreover, in the following situations, $X$ is not of SW-simple type:
\begin{itemize}
    \item $X$ is diffeomorphic to rational or ruled manifolds;
    \item $b_1(X)=0$ and $b_2^+(X)=1$;
    \item $b_1(X)=2$ and $(H^1(X))^2\neq 0$.
\end{itemize}

Let $(\tilde{X},\tilde{\omega})$ be some one-point symplectic blowup of $(X,\omega)$ and \[pr:H_2(\tilde{X};\ZZ)\rightarrow H_2(X;\ZZ)\] be the natural projection map. Consider the set
\[\mathcal{E}'_{\omega}:=pr(\mathcal{E_{\tilde{\omega}}})\setminus\{0\}\subseteq H_2(X;\ZZ)\]
and the value
\[d'_{\omega}:=\inf_{B\in \mathcal{E}'_{\omega}}\frac{\omega(B)}{c_1(B)-1}\in[0,\infty],\]
where the convention $\inf\emptyset=\infty$ is used. Biran's packing result in the special case of one ball can then be stated as the following.

\begin{theorem}[\cite{biranpacking}]\footnote{This is Theorem 6.A in the published version of \cite{biranpacking}. The arXiv version does not have section 6.}\label{thm:biranpacking}
    If $(X,\omega)$ is a symplectic $4$-manifold which is not of SW-simple type. Then $w_G(X,\omega)=\min\{\sqrt{[\omega]^2},d_{\omega}'\}$.
\end{theorem}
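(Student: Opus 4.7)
The plan is to establish the equality $w_G(X,\omega)=\min\{\sqrt{[\omega]^2},d'_\omega\}$ by proving the two inequalities separately. The bound $w_G(X,\omega)\leq \sqrt{[\omega]^2}$ is immediate from volume considerations: any symplectic embedding of $B^4(r)$ into $(X,\omega)$ forces $\tfrac{\pi^2 r^4}{2}\leq \tfrac{[\omega]^2}{2}$, whence $\pi r^2\leq \sqrt{[\omega]^2}$.

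For the other upper bound $w_G(X,\omega)\leq d'_\omega$, I would translate a ball embedding into a symplectic blowup. A symplectic embedding of $B^4(r)$ of capacity $a=\pi r^2$ into $(X,\omega)$ produces a symplectic form $\tilde\omega$ on $\tilde X=X\#\overline{\CC\PP^2}$ in the cohomology class $\pi^*[\omega]-a\,\mathrm{PD}(E)$, where $E\in H_2(\tilde X;\ZZ)$ is the exceptional class of the blowup. For any $E'\in \mathcal{E}_{\tilde\omega}$ whose projection $B=pr(E')$ is nonzero, I would write $E'=B+kE$ and use $(E')^2=-1$, $c_1(\tilde X)\cdot E'=1$, together with the blowup formula $c_1(\tilde X)=\pi^*c_1(X)-\mathrm{PD}(E)$, to pin down $k=1-c_1(B)$. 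The positivity $\tilde\omega(E')>0$ then forces $a(c_1(B)-1)<\omega(B)$, and taking the infimum over $B\in \mathcal{E}'_\omega$ yields $a\leq d'_\omega$.

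For the reverse inequality, fix any $a<\min\{\sqrt{[\omega]^2},d'_\omega\}$ and aim to symplectically embed a ball of capacity $a$. The strategy is to produce a symplectic form on $\tilde X$ in the class $\pi^*[\omega]-a\,\mathrm{PD}(E)$, since symplectically blowing down the exceptional divisor then yields the desired ball in $(X,\omega)$. Reversing the calculation above, the obstructions to existence of such a form on $\tilde X$ are precisely positivity of the square $[\omega]^2-a^2>0$ (i.e.\ $a<\sqrt{[\omega]^2}$) and positivity of the pairing with every exceptional class (i.e.\ $a<d'_\omega$). When $b_2^+(X)=1$, Theorem \ref{thm:liliucone} applied to $\tilde X$ produces such a form immediately. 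In general one argues by inflation: the non-SW-simple-type hypothesis guarantees enough embedded pseudoholomorphic curves in classes with nonvanishing Gromov--Taubes invariants that $\omega$ can be deformed to realize the required cohomology class on the blowup.

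The main technical obstacle lies in this last lower-bound construction; the upper bound is essentially formal once the blowup correspondence is set up. The non-SW-simple-type hypothesis is exactly what supplies the pseudoholomorphic input for inflation and controls the set $\mathcal{E}_{\tilde\omega}$ through the projection $pr$ onto $\mathcal{E}'_\omega$, so that $d'_\omega$ genuinely captures the sharp obstruction to ball packing. Without this hypothesis the inflation step breaks down and $d'_\omega$ need not be the actual Gromov width.
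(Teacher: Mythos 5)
This statement is quoted from Biran (Theorem~6.A of \cite{biranpacking}); the paper gives no proof of it, so there is no internal argument to compare yours against --- I can only measure your sketch against Biran's actual proof. Structurally you have reproduced it correctly: the volume bound, the McDuff--Polterovich correspondence between a ball of capacity $a$ and a blowup form in the class $\pi^*[\omega]-a\,\mathrm{PD}(E)$, the computation $k=1-c_1(B)$ forcing $a(c_1(B)-1)<\omega(B)$, and the identification of the lower bound as the real content. One small caveat on the ``formal'' direction: to run your positivity argument you need the exceptional classes $E'$ of the \emph{reference} blowup (the one used to define $\mathcal{E}'_\omega$) to still lie in $\mathcal{E}_{\tilde\omega}$ for the capacity-$a$ blowup form $\tilde\omega$; this uses the deformation invariance of $\mathcal{E}_{\tilde\omega}$ coming from Taubes--Seiberg--Witten theory (it depends only on the canonical class), so even the upper bound is not purely soft.

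The genuine gap is the lower bound, which you flag but do not supply. Saying that ``the obstructions are precisely $[\omega]^2-a^2>0$ and positivity on exceptional classes'' is only a theorem when $b_2^+=1$ (Theorem~\ref{thm:liliucone}); for $b_2^+>1$ no such description of the symplectic cone of $\tilde X$ is available, and this is exactly where all of Biran's work sits. The missing ingredients are: (i) Taubes' result that a class $A$ with nonzero Gromov invariant and $A^2-K_\omega\cdot A=2d>0$ is represented by an embedded pseudoholomorphic curve through $d$ generic points, which is what the non-SW-simple-type hypothesis buys you; (ii) passing to large multiples $nA$ to push $d\to\infty$ and arranging the curve to pass through the blown-up point, so that its proper transform in $\tilde X$ has large positive square; (iii) McDuff's inflation lemma along that proper transform; and (iv) a limiting/normalization argument showing the resulting cohomology classes sweep out everything up to the threshold $\min\{\sqrt{[\omega]^2},d'_\omega\}$, with $d'_\omega$ entering because the inflated forms must stay positive on the exceptional classes of $\tilde X$. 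As a proposal your outline points at the right machine, but the proof of the hard inequality is entirely contained in step (i)--(iv), none of which is argued.
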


When $X$ is further assumed to be not diffeomorphic to rational or ruled manifolds, (\ref{equation:-1}) tells us the set $\mathcal{E}'_{\omega}$ must be either empty (when $X$ is minimal) or only contain exceptional classes $E$ in $\mathcal{E}_{\omega}$ which all have $c_1(E)=1$. Therefore, $d'_{\omega}=\infty$ in this case and we actually have the following corollary.

\begin{corollary}\label{cor:biranpacking}
    Let $(X,\omega)$ be a symplectic $4$-manifold which is not of SW-simple type and not diffeomorphic to rational or ruled manifolds. Then $w_G(X,\omega)=\sqrt{[\omega]^2}$.
\end{corollary}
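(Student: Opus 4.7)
The plan is to apply Theorem \ref{thm:biranpacking} directly and show $d'_\omega=+\infty$, so that $\min\{\sqrt{[\omega]^2},d'_\omega\}$ collapses to $\sqrt{[\omega]^2}$. Concretely, I would show that $\mathcal{E}'_\omega$ is either empty or consists entirely of classes $B$ with $c_1(B)=1$, each contributing $\omega(B)/(c_1(B)-1)=+\infty$ under the stated convention.

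To identify $\mathcal{E}'_\omega$, fix any one-point symplectic blowup $(\tilde X,\tilde\omega)$ of $(X,\omega)$. Since $X$ is not diffeomorphic to a rational or ruled manifold, neither is $\tilde X$, and both share the same minimal model $\hat X$, which is unique by Theorem \ref{thm:minimalmodel}. Writing $X=\hat X\#l\overline{\CC\PP^2}$, we have $\tilde X=\hat X\#(l+1)\overline{\CC\PP^2}$. Applying (\ref{equation:-1}) simultaneously to $(X,\omega)$ and $(\tilde X,\tilde\omega)$ then describes $\mathcal{E}_\omega$ and $\mathcal{E}_{\tilde\omega}$ as precisely the line classes of the respective blowup factors. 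The projection $pr$ kills the line class created by the last blowup and sends the remaining line classes bijectively to those of $X$, so $\mathcal{E}'_\omega=\mathcal{E}_\omega$.

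In either case (empty when $X$ is minimal, otherwise a set of $(-1)$-sphere classes), each $B\in\mathcal{E}'_\omega$ is represented by a symplectic exceptional sphere, hence $c_1(B)=1$ and $\omega(B)>0$. This gives $d'_\omega=+\infty$ (either by $\inf\emptyset=\infty$ or by the convention $\omega(B)/0=+\infty$), and Theorem \ref{thm:biranpacking} then yields $w_G(X,\omega)=\sqrt{[\omega]^2}$. The only point worth verifying carefully is that in $(\tilde X,\tilde\omega)$ the newly-created line class and the proper transforms of existing exceptional spheres all appear with the same sign in $\mathcal{E}_{\tilde\omega}$, so that $pr(\mathcal{E}_{\tilde\omega})\setminus\{0\}$ really is $\mathcal{E}_\omega$ and not some partial subset; this is however immediate from the symplectic blowup construction (one may blow up at a point disjoint from all exceptional spheres of $\omega$, so all prior exceptional spheres survive symplectically in $\tilde X$).
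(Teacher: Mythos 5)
Your proposal is correct and follows essentially the same route as the paper: the paper likewise deduces from (\ref{equation:-1}) that $\mathcal{E}'_{\omega}$ is either empty or consists of classes $E\in\mathcal{E}_{\omega}$ with $c_1(E)=1$, hence $d'_{\omega}=\infty$, and then invokes Theorem \ref{thm:biranpacking}. Your extra care about the sign of the projected line classes and the identification $\mathcal{E}'_{\omega}=\mathcal{E}_{\omega}$ is a reasonable elaboration of the same argument.
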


\begin{rmk}
    \cite[Theorem 2.F]{biranpacking} already points out that if $X$ is minimal, not of SW-simple type and not diffeomorphic to rational or ruled manifolds, then $(X,\omega)$ admits a full packing by one ball. As shown above, the minimality assumption can actually be removed  in the non-rational or ruled cases by the observation (\ref{equation:-1}). This observation is also used in \cite{LiLiu01} to establish Theorem \ref{thm:liliucone}.
\end{rmk}

\begin{rmk}
    Note that cohomologous symplectic forms on rational or ruled manifolds are symplectomorphic (\cite[Theorem 3.18]{Li08}) and thus have the same Gromov width. Therefore, Corollary \ref{cor:biranpacking} also implies that the candidates of closed $4$-manifolds admitting cohomologous symplectic forms with different Gromov widths must have SW-simple type (such as $K3,T^4$).
\end{rmk}




\subsection{Uniruled symplectic manifolds}

\begin{definition}
For semipositive symplectic manifold $(X,\omega)$, a non-zero class $A\in H_2(X;\ZZ)$ is called a {\bf uniruled class} if there is a non-trivial genus zero Gromov-Witten invariant with a point insertion 
$$\text{GW}_{A,k}^X(\text{PD}(pt),a_2,\cdots,a_k)\neq 0.$$
$(M,\omega)$ is said to be {\bf symplectically uniruled} if there exists a uniruled class.
\end{definition}

The following celebrated result goes back to \cite{Gromov85}.

\begin{theorem}[Gromov]\label{thm:gromov}
    For any uniruled class $A$, $w_G(X,\omega)\leq \omega(A)$.
\end{theorem}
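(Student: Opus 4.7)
The plan is the classical Gromov argument: combine the non-vanishing Gromov-Witten invariant (which produces a $J$-holomorphic curve in class $A$ through any prescribed point) with the monotonicity inequality for minimal surfaces through the center of a standard ball.

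First, suppose $\phi: B^{2n}(r) \hookrightarrow X$ is a symplectic embedding and set $p=\phi(0)$. For any $r' < r$, I would choose an $\omega$-tame almost complex structure $J$ on $X$ which agrees with $\phi_*J_{\mathrm{std}}$ on $\phi(B^{2n}(r'))$, taking $J$ generic among such tame structures. Since $X$ is semipositive and $A$ is uniruled, we have $\text{GW}^X_{A,k}(\text{PD}(pt), a_2, \ldots, a_k) \neq 0$. Interpreting this invariant as a signed count of genus zero $J$-holomorphic stable maps in class $A$ that pass through a chosen point together with cycles Poincar\'e dual to the remaining insertions, non-vanishing produces, for our $J$ and for the specific point $p$, a nonconstant stable map $u:\Sigma \to X$ in class $A$ with $p \in u(\Sigma)$.

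Next, let $C \subseteq u(\Sigma)$ be an irreducible component containing $p$. Pulling back by $\phi^{-1}$, the intersection $C \cap \phi(B^{2n}(r'))$ becomes a $J_{\mathrm{std}}$-holomorphic curve through the origin of $B^{2n}(r')$, which is in particular a minimal surface. The monotonicity inequality (see e.g.\ \cite{MSbook}) then gives
\[
\mathrm{Area}\bigl(C \cap \phi(B^{2n}(r'))\bigr) \;\geq\; \pi r'^2,
\]
where area is measured using the K\"ahler metric on the ball. Since the area of a $J$-holomorphic curve equals its symplectic area, and all other components of $u(\Sigma)$ contribute nonnegatively, we obtain
\[
\omega(A) \;=\; \int_{\Sigma} u^*\omega \;\geq\; \omega([C]) \;\geq\; \pi r'^2.
\]
Letting $r' \nearrow r$ and taking the supremum over symplectically embedded balls yields $w_G(X,\omega) \leq \omega(A)$.

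The principal technical point is ensuring that the GW count is actually realized by an honest $J$-holomorphic curve through $p$ for an almost complex structure constrained to be standard on a neighborhood of $p$. This is the customary subtlety: one needs transversality for the constrained moduli space of pointed stable maps, which under semipositivity follows from a standard generic perturbation of $J$ away from $\phi(B^{2n}(r'))$, using that the almost complex structure is unconstrained on the complement. The monotonicity step itself is classical and causes no difficulty.
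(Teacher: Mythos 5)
The paper does not prove this statement; it simply cites Gromov's original work, and the intended argument is exactly the classical one you give: non-vanishing of a point-constrained genus zero GW invariant forces a $J$-holomorphic representative of $A$ through the center of any embedded ball for $J$ standard there, and monotonicity bounds its area from below by $\pi r'^2$. Your write-up is correct, including the key justifications (genericity of $J$ away from the ball is enough because a curve in the nonzero class $A$ cannot be contained in the contractible ball, and semipositivity controls the multiply covered strata), so there is nothing to add.
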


In dimension $4$, by the fundamental work \cite{McDuffrationalruled}, it is well known (\cite[Theorem 7.3]{Wendlbook}) that the characterization of symplectic uniruledness is equivalent to the underlying smooth $4$-manifold $X$ being rational or ruled manifolds. Namely, $X$ must be diffeomorphic to the blowup of $\CC\PP^2$ or $S^2$-bundle over Riemann surface. Moreover, when $X\neq \CC\PP^2$, the uniruled class $A$ can be chosen to be the `fiber class' in the sense that $A$ can be represented by a symplectically embedded sphere of self-intersection zero. The following statement will be used for the upper bound of Gromov width.

\begin{proposition}\label{prop:upperbound}
    Let $(X,\omega)$ be a symplectic rational manifold where $X=\CC\PP^2\#k\overline{\CC\PP^2}$. Then there always exists a uniruled class $A\in H_2(X;\ZZ)$ satisfying $A^2=0$ such that
    \begin{itemize}
        \item when $1\leq k\leq 4$, $\omega(A)<\sqrt{[\omega]^2}$; 
        \item when $k\geq 5$, there exists another symplectic form $\omega'$ deformation equivalent to $\omega$ satisfying $\omega'(A)<\sqrt{[\omega']^2}$ and $[\omega']\cdot K_{\omega'}<0$\footnote{This automatically holds when $k\leq 9$ by light cone lemma.}.
    \end{itemize}
\end{proposition}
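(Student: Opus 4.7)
The plan is to exhibit an explicit uniruled square-zero class $A$ in the standard basis $\{H, E_1, \ldots, E_k\}$ of $H_2(X;\ZZ)$ and verify $\omega(A)^2 < [\omega]^2$ directly from the symplectic cone constraints. Write $\lambda = \omega(H)$, $\mu_i = \omega(E_i)$, and take the canonical class $K = -3H + \sum E_i$. Theorem \ref{thm:liliucone} supplies $\mu_i > 0$, $\lambda > \mu_i + \mu_j$ for $i \neq j$, and $[\omega]^2 = \lambda^2 - \sum \mu_i^2 > 0$. After relabeling the $E_i$ via a self-diffeomorphism of $X$, I may assume $\mu_1 \geq \mu_2 \geq \cdots \geq \mu_k$.

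For $1 \leq k \leq 3$, take $A = H - E_1$. Since $A^2 = 0$ and $c_1(A) = 2$, the standard pseudoholomorphic argument (as in McDuff's theorem for rational and ruled surfaces) produces an embedded $J$-holomorphic sphere in class $A$, so $A$ is uniruled. The required inequality rearranges to $2\mu_1(\lambda - \mu_1) > \sum_{i \geq 2}\mu_i^2$, which follows from $\lambda - \mu_1 > \mu_2$ (by $\lambda > \mu_1 + \mu_2$) combined with $2\mu_1\mu_2 \geq 2\mu_2^2 \geq \mu_2^2 + \mu_3^2$.

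For $k = 4$, I would use two candidate classes, $A_1 = H - E_1$ and $A_2 = 2H - E_1 - E_2 - E_3 - E_4$, both of which satisfy $A^2 = 0$ and $c_1(A) = 2$ and are therefore uniruled, and show that at least one of them witnesses the inequality. If $A_1$ fails, the rearrangement yields $\mu_2(2\mu_1 - \mu_2) < \mu_3^2 + \mu_4^2$, which combined with the ordering forces $\mu_1 < \tfrac{3}{2}\mu_2$ and $\mu_2 < \sqrt{2}\mu_3$; the four $\mu_i$ are then comparable. In this regime a direct expansion of $(2\lambda - s)^2 - (\lambda^2 - \sum \mu_i^2)$ (with $s = \sum \mu_i$), using the upper bound on $\lambda$ from the failure of $A_1$ together with the cone bound $\lambda > \mu_i + \mu_j$ from below, verifies that $A_2$ succeeds. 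This case split is the main computational obstacle.

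For $k \geq 5$, rather than analyzing the given $\omega$, I would exploit that $\mathcal{C}_K$ is convex, hence path-connected: it is the intersection of the forward cone $\mathcal{FP}(K)$, which is a convex component of the positive cone in signature $(1,k)$, with countably many open half-spaces, by Theorem \ref{thm:liliucone}. Consequently any two symplectic forms with cohomology classes in $\mathcal{C}_K$ are deformation equivalent. Fix $0 < \epsilon \ll \delta \ll 1$ and define $[\omega'] \in H^2(X;\RR)$ by $\omega'(H) = 1$, $\omega'(E_1) = 1 - \delta$, and $\omega'(E_i) = \epsilon$ for $i \geq 2$; a routine check confirms that $[\omega']$ pairs positively with every class in $\mathcal{E}_K$, hence $[\omega'] \in \mathcal{C}_K$. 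Taking $A = H - E_1$, I compute $\omega'(A) = \delta$ and $(\omega')^2 = 2\delta - \delta^2 - (k-1)\epsilon^2 > \delta^2$ for $\epsilon, \delta$ sufficiently small, yielding $\omega'(A)^2 < (\omega')^2$; moreover $K \cdot [\omega'] = -3 + (1-\delta) + (k-1)\epsilon < 0$ provided $\epsilon < (2 + \delta)/(k-1)$, so all three requirements are met simultaneously.
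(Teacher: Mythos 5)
Your overall strategy is workable and genuinely different from the paper's: the paper first normalizes $[\omega]$ to Karshon--Kessler reduced form ($a\ge b_1+b_2+b_3$, $b_1\ge\cdots\ge b_k>0$), after which the single class $H-E_1$ works for all $k\le 4$ via a two-line estimate, and for $k\ge 5$ it only shrinks $b_5,\dots,b_k$. You instead work with raw cone inequalities, which is fine for $k\le 3$, but it leaves two genuine gaps. First, for $k=4$ you never actually carry out the computation showing that $A_2=2H-E_1-\cdots-E_4$ succeeds whenever $A_1=H-E_1$ fails; you explicitly label it ``the main computational obstacle'' and assert the outcome. The claim is true (it is equivalent, via the Cremona reflection in $H-E_1-E_2-E_3$ which carries $H-E_4$ to $A_2$, to the paper's statement for reduced classes), but as written this step is a promissory note, not a proof. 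Note that $f_2(\lambda)=(2\lambda-s)^2-[\omega]^2$ is convex in $\lambda$, so it suffices to check the two endpoints $\lambda=\mu_1+\mu_2$ and $\lambda=\mu_1+\tfrac{\mu_2^2+\mu_3^2+\mu_4^2}{2\mu_1}$; the latter reduces to $(\mu_1-\mu_2)^2+(\mu_1-\mu_3)^2+(\mu_1-\mu_4)^2\le\mu_1^2$, which does follow from the failure of $A_1$ but needs to be proved. The paper's reduction via \cite{KK17} avoids this entirely.

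Second, and more seriously, for $k\ge 5$ your deduction ``$\mathcal{C}_K$ is convex, hence path-connected, \emph{consequently} any two symplectic forms with cohomology classes in $\mathcal{C}_K$ are deformation equivalent'' is a non sequitur. Path-connectedness of the set of realized cohomology classes does not produce a continuous path of symplectic \emph{forms}: a path $a_t$ in $\mathcal{C}_K$ only tells you each $a_t$ is represented by some form, with no coherence in $t$. The correct input, which the paper cites as \cite[Examples 3.9, 3.10]{Salamonsurvey}, is the combination of (i) a connected family of blowup K\"ahler forms sweeping out the cone and (ii) McDuff/Li--Liu uniqueness of cohomologous forms with the same canonical class on rational surfaces; you need to invoke something of this kind rather than derive it from convexity. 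Relatedly, your ``routine check'' that $[\omega']$ pairs positively with every class in $\mathcal{E}_K$ is not routine for $k\ge 9$, where $\mathcal{E}_K$ is infinite: for $E=dH-\sum m_iE_i$ with $d\to\infty$ and $m_1/d\to 1$ the margin degenerates to roughly $\delta-2\epsilon$ per unit of $d$, so the required smallness of $\epsilon$ relative to $\delta$ must be argued uniformly (or sidestepped by constructing $\omega'$ directly as a K\"ahler blowup of $\CC\PP^2$ with one large and $k-1$ small balls). The remaining computations ($k\le 3$, the numerics of $\omega'$, and $K_{\omega'}\cdot[\omega']<0$) are correct.
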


\begin{proof}
Choose the standard basis $\{H,E_1,\cdots,E_k\}\subseteq H_2(X;\ZZ)$, where $H^2=1$ and all $E_i^2=-1$. Let $a=\omega(H)$ and $b_i=\omega(E_i)$. By the main result in \cite{KK17}, we may assume the period of $\omega$ satisfies the reduced condition\footnote{When $k=1,2$, the first equality is $a\geq b_1$ or $a\geq b_1+b_2$.}:
\[a\geq b_1+b_2+b_3,\,\,\,b_1\geq b_2\geq \cdots \geq b_{k}>0.\]
In this case, the canonical class $K_{\omega}=\text{PD}(-3H+\sum E_i)$. Then we can take the fiber class $H-E_1$ to be the uniruled class $A$ and compute
\[[\omega]^2-(\omega(A))^2=a^2-\sum_{i=1}^k b_i^2-(a-b_1)^2=2b_1(1-b_1)-\sum_{i=2}^kb_i^2\geq 2b_1(b_2+b_3)-\sum_{i=2}^kb_i^2.\]
It is clear that if $k\leq 4$, $2b_1(b_2+b_3)-\sum_{i=2}^kb_i^2$ will be positive by the reduced condition. If $k\geq 5$, in order to guarantee the positivity, we just need to deform the values of $b_5,\cdots,b_k$ so that they are as small as possible. This can always be realized by a symplectic deformation equivalence by \cite[Example 3.9+3.10]{Salamonsurvey}. Moreover, when $b_5,\cdots, b_k$ are very small, it also follows from the reduced condition that
\[[\omega']\cdot K_{\omega'}=-3a+b_1+b_2+b_3+b_4+\sum_{i\geq 5} b_i<0.\]
\end{proof}

Finally, we need the fact that the uniruled class can be lifted after taking the product with some symplectic $(S^2,\omega_{S^2})$. Indeed, \cite{LRuniruled} investigates when the existence of uniruled symplectic submanifolds with codimension $2$ implies the uniruledness of the ambient manifold. The following proposition is a very simple case. For example, see \cite[Proposition 2.10]{LRuniruled}.

\begin{proposition}\label{prop:uniruledindim6}
    Let $A\in H_2(X;\ZZ)$ be a uniruled class of $(X,\omega)$. Then, under the natural inclusion $H_2(X;\ZZ)\subseteq H_2(X\times S^2;\ZZ)$, $A$ is also a uniruled class for $(X\times S^2,\omega\oplus \omega_{S^2})$.
\end{proposition}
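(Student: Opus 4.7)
The plan is to apply the product formula for genus-zero Gromov--Witten invariants to the class $(A,0)\in H_2(X;\ZZ)\oplus H_2(S^2;\ZZ)=H_2(X\times S^2;\ZZ)$, which under the natural inclusion is exactly the image of $A$. With respect to a product almost complex structure $J=J_X\oplus j_{S^2}$, any genus-zero $J$-holomorphic map $\CC\PP^1\to X\times S^2$ in class $(A,0)$ factors as $(u,c)$ where $u$ represents $A$ in $X$ and $c$ is constant. Consequently the moduli space of $k$-pointed stable maps in class $(A,0)$ is identified with $\overline{\mathcal{M}}_{0,k}(X,A)\times S^2$, with the $S^2$-factor recording the image of the constant map.

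Under the K\"unneth decomposition $H^{*}(X\times S^2)=H^{*}(X)\otimes H^{*}(S^2)$, the standard product formula for genus-zero Gromov--Witten invariants then yields
\[\text{GW}^{X\times S^2}_{(A,0),k}(\alpha_1\otimes\beta_1,\cdots,\alpha_k\otimes\beta_k) = \text{GW}^X_{A,k}(\alpha_1,\cdots,\alpha_k)\cdot\int_{S^2}\beta_1\cup\cdots\cup\beta_k.\]
Using the uniruled hypothesis, pick $a_2,\cdots,a_k\in H^{*}(X)$ so that $\text{GW}^X_{A,k}(\text{PD}(pt_X),a_2,\cdots,a_k)\neq 0$, and insert $\text{PD}(pt_X)\otimes \text{PD}(pt_{S^2})=\text{PD}(pt_{X\times S^2})$ at the first marked point and $a_i\otimes 1$ at the rest. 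The $S^2$-factor becomes $\int_{S^2}\text{PD}(pt_{S^2})=1$, so the resulting invariant on $X\times S^2$ carries a point insertion and is nonzero, exhibiting $(A,0)$ as a uniruled class of $(X\times S^2,\omega\oplus\omega_{S^2})$. Note that $X\times S^2$ is automatically semipositive since every closed $6$-manifold is, so the Gromov--Witten invariants involved are defined in the classical sense used in the paper.

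The main conceptual obstacle is justifying the product formula at the level of virtual fundamental classes. However, for classes of the form $(A,0)$ with trivial $S^2$-component, the situation is the transparent one: the moduli space literally equals the product $\overline{\mathcal{M}}_{0,k}(X,A)\times S^2$, and the linearized Cauchy--Riemann operator along the constant $S^2$-component has cokernel $H^{0,1}(\CC\PP^1,c^{*}TS^2)=0$, so the obstruction theory splits and no virtual perturbation beyond the one on the $X$-factor is required. This reduces the claim to the computation above and completes the proof plan.
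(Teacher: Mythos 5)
Your proof is correct and is essentially the argument the paper has in mind: the paper gives no proof of its own, instead citing \cite[Proposition 2.10]{LRuniruled}, whose content is exactly this product-formula computation for the class $(A,0)$ with a split almost complex structure. Your dimension/obstruction bookkeeping (constancy of the $S^2$-component, $H^{0,1}(\CC\PP^1,\mathcal{O})=0$, and the automatic semipositivity of symplectic $6$-manifolds) is accurate, so nothing is missing.
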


\begin{rmk}
  The obstruction coming from `stabilized' $J$-holomoprhic curves for symplectic embeddings was also used in the recent work \cite{siegelyao}. For any Riemann surface $\Sigma$, the problem of embedding $\sqcup B^4(R_i)\times \Sigma\hookrightarrow B^4(R)\times \Sigma$ was considered there.
\end{rmk}

\section{Dimension $6$}

From now on, $(X,\omega)$ and $(X',\omega')$ will be two simply-connected closed symplectic $4$-manifolds which are homeomorphic to each other. By Wall's result \cite{Wall64}, $X$ and $X'$ are $h$-cobordant and so are $X\times S^2$ and $X'\times S^2$. Then Smale's $h$-cobordism theorem \cite{smale} implies that there will be an orientation-preserving diffeomorphism between $X\times S^2$ and $X'\times S^2$. For our purpose of choosing cohomologous symplectic forms, we still need to keep track of the cohomological action of the diffeomorphism. The following classification result tells us that the category of torsion-free simply-connected smooth $6$-manifolds are equivalent to the category of cohomology rings with certain compatible characteristic classes. See also \cite{Ruan3fold}.

\begin{theorem}[\cite{Jupp}]\label{thm:jupp}
    Two closed simply-connected oriented smooth $6$-manifolds $M,N$ with torsion-free homology are oriented diffeomorphic if and only if there is an algebraic isomorphism $\Phi:H^*(M;\ZZ)\rightarrow H^*(N;\ZZ)$ which preserves the cup product $\mu:H^2\otimes H^2\otimes H^2\rightarrow\ZZ$, second Stiefel-Whitney class $w_2$ and first Pontryagin class $p_1$. Furthermore, the diffeomorphism can be chosen to realize this algebraic isomorphism $\Phi$.
\end{theorem}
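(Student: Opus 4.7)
The only-if direction is immediate since oriented diffeomorphisms preserve cohomology rings and characteristic classes, so the plan is to establish the if direction: given the algebraic isomorphism $\Phi$, produce an orientation-preserving diffeomorphism realizing it. The overall strategy is to extend Wall's classification of simply-connected spin $6$-manifolds (via highly-connected surgery) to the possibly non-spin setting by carefully tracking framing data through a handle decomposition, and then upgrade the resulting combinatorial matching to a genuine diffeomorphism via the $h$-cobordism theorem.

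First, I would put both $M$ and $N$ into a standard handle decomposition with a single $0$-handle and $6$-handle, no $1$- or $5$-handles (these can be traded away using simple connectivity together with Smale's cancellation lemma), and collections of $2$-, $3$-, and $4$-handles. The $2$- and $4$-handles are Poincar\'e dual, and since $H_*$ is torsion-free, the handle chain complex is a free resolution of $H_*(M;\ZZ)$ with zero boundary maps, so the number of handles of each index is determined by the ranks of the cohomology groups, hence agrees across $M$ and $N$ under $\Phi$.

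Second, I would show that the attaching data for each handle is encoded by the listed invariants. The $2$-handles are attached to $S^5=\partial D^6$ along an unknotted unlink of $2$-spheres (by general position in dimension $5$), and their framings lie in $\pi_1(SO(4))=\ZZ/2$, where the mod-$2$ framing is determined by evaluating $w_2$ on the corresponding class in $H_2$. The $3$-handles are attached along embedded $3$-spheres in the boundary of the $2$-handlebody; their isotopy classes, together with their normal framings in $\pi_2(SO(3))$, can be characterized by the triple cup product $\mu$ and by $p_1$, since these invariants jointly encode the linking numbers of the attaching spheres with the co-cores of the $2$-handles and the required framing corrections. The $4$-handles are then determined dually by the $2$-handle data.

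Finally, to upgrade the combinatorial agreement to a diffeomorphism realizing $\Phi$, I would assemble a cobordism $W$ between $M$ and $N$ whose handle-slide modifications realize $\Phi$ on cohomology, verify that $W$ is an $h$-cobordism (using that the matched invariants force the relative homology to vanish), and invoke Smale's $h$-cobordism theorem, which applies since $\dim W=7$ and $W$ is simply-connected. The main obstacle lies in the realization problem at the level of $3$-handles: one must show that every abstract isomorphism of the trilinear form $\mu$ preserving $w_2$ and $p_1$ is actually induced by an ambient diffeomorphism of the $2$-handlebody carrying one collection of attaching $3$-spheres to another. This requires a delicate surgery argument in the spirit of Wall, showing that every class in the Hurewicz image of $\pi_3$ of the $3$-skeleton admits an embedded framed representative in the prescribed isotopy class compatible with the characteristic class data, and that any two such representatives differ by a sequence of handle slides allowed by $\Phi$.
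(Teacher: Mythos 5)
The paper offers no proof of this statement: it is imported wholesale from \cite{Jupp}, so the only meaningful comparison is with Jupp's (and Wall's) actual argument. That argument does not proceed by matching handle decompositions directly; it first splits $M\cong M_0\# k(S^3\times S^3)$ with $H_3(M_0)=0$, then classifies the core $M_0$ by surgery theory --- computing normal invariants and using that the relevant surgery obstruction group $L_7(e)$ vanishes to turn a normal cobordism into an $h$-cobordism. Your outline is in the same general spirit but is not a proof, for the reasons below.

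First, there are index slips in the handle data: a $k$-handle $D^k\times D^{6-k}$ is attached along $S^{k-1}\times D^{6-k}$, so the $2$-handles are attached along circles in $S^5$ (not $2$-spheres) --- your framing group $\pi_1(SO(4))=\ZZ/2$ is the one for circles, so the text is internally inconsistent --- and the $3$-handles are attached along $2$-spheres in the boundary of the $2$-handlebody, with framing indeterminacy $\pi_2(SO(3))=0$. These are repairable, but they signal that the geometric bookkeeping has not actually been carried out. Second, and decisively, the step you yourself label ``the main obstacle'' is precisely the entire content of the theorem: the assertion that $\mu$ and $p_1$ ``jointly encode the linking numbers of the attaching spheres \ldots and the required framing corrections,'' and that every abstract isomorphism preserving $(\mu,w_2,p_1)$ is induced by a diffeomorphism of the $3$-skeleton carrying one set of attaching data to the other, is stated without argument. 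The way $p_1$ enters is genuinely subtle (e.g.\ through Wall-type congruences such as $4\mu(x,x,x)+p_1\cdot x\equiv 0 \pmod{24}$ in the spin case), and establishing sufficiency of these three invariants requires either the full middle-dimensional gluing analysis or the surgery-theoretic computation of the structure set; neither appears. Third, your final step asserts that ``the matched invariants force the relative homology [of $W$] to vanish'': a cobordism with matched boundary invariants is not automatically an $h$-cobordism; one must first produce a \emph{normal} cobordism and then surger its interior, and the obstruction to completing the surgery lives in $L_7(e)$. That group happens to be zero, but your sketch never identifies the obstruction, let alone shows it vanishes. As written, the proposal is a plausible table of contents for Jupp's proof with its two hardest chapters missing.
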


We will apply the above theorem for $M=X\times S^2,N=X'\times S^2$. Note that (similarly for $X'\times S^2$)
\[H^2(X\times S^2;\ZZ)=H^2(X;\ZZ)\oplus \ZZ[\text{PD}(X)],\]
\[H^4(X\times S^2;\ZZ)=\ZZ[\text{PD}(S^2)]\oplus (H^2(X;\ZZ)\otimes \ZZ[\text{PD}(X)]),\]
\[w_2(X\times S^2)=\ZZ_2\text{-reduction of }c_1(\omega),\]
\[p_1(X\times S^2)=3\sigma(X)\text{PD}(S^2).\]
If $$\phi:H^2(X';\ZZ)\rightarrow H^2(X;\ZZ)$$ is any isomorphism preserving the intersection pairings, then we can naturally extend $\phi$ to an isomorphism $$\Phi:H^*(X'\times S^2;\ZZ)\rightarrow H^*(X\times S^2;\ZZ)$$ which maps $$\alpha,\text{PD}(X'),\text{PD}(S^2),\alpha\otimes \text{PD}(X')$$ to $$\phi(\alpha),\text{PD}(X),\text{PD}(S^2),\phi(\alpha)\otimes \text{PD}(X)$$ respectively, for any $\alpha\in H^2(X';\ZZ)$. Since $\phi$ maps characteristic elements to characteristic elements in the intersection form, the $\ZZ_2$-reduction of $\Phi$ will preserve $w_2$. $\Phi$ also preserves $p_1$ since it maps $\text{PD}(S^2)$ to $\text{PD}(S^2)$. Consequently, the following corollary is immediate.

\begin{corollary}\label{cor:jupp}
 Any isomorphism \[\phi\oplus \text{Id}:H^2(X';\ZZ)\oplus \ZZ[\text{PD}(X')]\rightarrow H^2(X;\ZZ)\oplus \ZZ[\text{PD}(X)],\] where $\phi$ preserves intersection pairings and $\text{Id}$ maps $\text{PD}(X')$ to $\text{PD}(X)$, can be realized as the induced map of an orientation-preserving diffeomorphism between $X\times S^2$ and $X'\times S^2$.
\end{corollary}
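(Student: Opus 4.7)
The plan is to apply Jupp's classification (Theorem \ref{thm:jupp}) with $M = X \times S^2$ and $N = X' \times S^2$. Both are closed simply-connected smooth $6$-manifolds with torsion-free integral cohomology, so it suffices to promote $\phi \oplus \text{Id}$ to a graded ring isomorphism $\Phi: H^*(X' \times S^2;\ZZ) \to H^*(X \times S^2;\ZZ)$ preserving the triple cup product on $H^2$, the second Stiefel-Whitney class $w_2$, and the first Pontryagin class $p_1$; Jupp's theorem will then deliver an orientation-preserving diffeomorphism realizing $\Phi$, and in particular realizing $\phi \oplus \text{Id}$ on $H^2$.

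First I would extend $\phi \oplus \text{Id}$ using the Künneth decomposition already recorded in the excerpt: on $H^4$ set $\text{PD}(S^2) \mapsto \text{PD}(S^2)$ and $\alpha \otimes \text{PD}(X') \mapsto \phi(\alpha) \otimes \text{PD}(X)$ for $\alpha \in H^2(X';\ZZ)$, and on $H^6$ send fundamental class to fundamental class. The preservation of the cup product $H^2 \otimes H^2 \otimes H^2 \to \ZZ$ then reduces, via Künneth, to the identity $\phi(\alpha) \cdot \phi(\beta) = \alpha \cdot \beta$ for $\alpha, \beta \in H^2(X';\ZZ)$, which is precisely the hypothesis on $\phi$, together with the fact that $\Phi$ fixes both $\text{PD}(X)$ and $\text{PD}(S^2)$.

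Next I would verify the two characteristic-class conditions. For $w_2$: since $S^2$ is spin, $w_2(X \times S^2)$ is the pullback of $w_2(X)$, so it suffices to check that the mod-$2$ reduction of $\phi$ sends $w_2(X')$ to $w_2(X)$. Wu's formula characterizes $w_2$ of a simply-connected closed oriented $4$-manifold as the unique element of $H^2(\cdot;\ZZ/2)$ which is a characteristic class of the mod-$2$ intersection form; since $\phi$ preserves the integer intersection form and $H^2$ is torsion-free, its mod-$2$ reduction is an isometry of the mod-$2$ intersection form and must therefore send the unique mod-$2$ characteristic class on one side to its counterpart on the other. For $p_1$: as noted in the excerpt $p_1(X \times S^2) = 3\sigma(X)\,\text{PD}(S^2)$, and since $X, X'$ are homeomorphic we have $\sigma(X) = \sigma(X')$; combined with $\Phi(\text{PD}(S^2)) = \text{PD}(S^2)$, this gives $\Phi(p_1(X'\times S^2)) = p_1(X\times S^2)$.

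The main, and essentially only, subtle point is the $w_2$ preservation step, which is a clean linear-algebra consequence of Wu's formula applied to a simply-connected $4$-manifold; all other hypotheses of Jupp's theorem are built into the construction of $\Phi$. With these checks in place, Jupp's theorem directly produces the desired orientation-preserving diffeomorphism between $X \times S^2$ and $X' \times S^2$.
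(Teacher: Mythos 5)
Your proposal is correct and follows the same route as the paper: extend $\phi\oplus\mathrm{Id}$ over the K\"unneth decomposition to a ring isomorphism $\Phi$, check the triple cup product, $w_2$ (via $\phi$ preserving characteristic elements of the intersection form), and $p_1$ (via $\Phi$ fixing $\mathrm{PD}(S^2)$ and $\sigma(X)=\sigma(X')$), then invoke Jupp's theorem. The only difference is that you spell out the $w_2$ step through Wu's formula and uniqueness of the mod-$2$ characteristic element, which the paper asserts more briefly.
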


\begin{rmk}
    Corollary \ref{cor:jupp} is about the existence of such a diffeomorphism inducing the desired cohomology map. Conversely, it has been observed in \cite{HW23} that under the condition $\sigma(X)\neq 0$, the induced cohomology map of any diffeomorphism must also split as $\phi\oplus \pm\text{Id}$, where $\phi$ may reverse the sign of intersection pairings. This is also alluded in \cite[Example 9.7.1]{MSJcurve}. 
\end{rmk}

\section{Proof of Theorem \ref{thm:main}}
\begin{proof}[Proof of Theorem \ref{thm:main}]
    Let $(X,\omega)$ be a symplectic $4$-manifold homeomorphic but not diffeomorphic to the rational manifold $\CC\PP^2\#k\overline{\CC\PP^2}$. By Theorem \ref{thm:minimalmodel}, we can take its unique minimal model $(\hat{X},\hat{\omega})$ by blowing down $l$ disjoint symplectic exceptional spheres in classes $E_1,\cdots,E_l\in H_2(X;\ZZ)$. Let $\hat{X'}$ be the rational manifold homeomorphic to $\hat{X}$ and let $X':=\hat{X}\#l\overline{\CC\PP^2}$. Denote the line classes of $l\overline{\CC\PP^2}$ in $X'$ by $E'_1\cdots,E'_l\in H_2(X';\ZZ)$. By Proposition \ref{prop:upperbound}, we can choose some symplectic form $\omega'$ on $X'$ such that there is a uniruled class $A$ with $\omega'(A)<\sqrt{[\omega']^2}$. We further may assume $K_{\omega'}=K_{\hat{X'}}+\sum \text{PD}(E'_i)$ for some $K_{\hat{X'}}\in \mathcal{K}_{\hat{X'}}$ by first choosing symplectic form on $\hat{X'}$ and then performing blowup with small size to obtain $\omega'$. By Corollary \ref{cor:jupp}, there are orientation-preserving diffeomorphisms 
    \[f:\hat{X}\times S^2\rightarrow \hat{X'}\times S^2,\,\,\, \tilde{f}: X\times S^2\rightarrow X'\times S^2\]
    such that $f^*=\phi\oplus \text{Id},\tilde{f}^*=\tilde{\phi}\oplus \text{Id}$, where $\phi:H^2(\hat{X'};\ZZ)\rightarrow H^2(\hat{X};\ZZ)$ is an isomorphism preserving the intersection pairings and $\tilde{\phi}$ is the extension of $\phi$ which maps each $\text{PD}(E_i')$ to $\text{PD}(E_i)$. Let us choose two classes in $\mathcal{K}_X$ \[K_+:=K_{\hat{\omega}}+\sum \text{PD}(E_i),\,\,\,K_{-}:=-K_{\hat{\omega}}+\sum \text{PD}(E_i)\] and consider the class\footnote{Here, we also use $\tilde{\phi}$ to denote the map between real cohomology groups.} $\tilde{\phi}([\omega'])$.  Since $[\omega']\in\mathcal{C}_{K_{\omega'}}$ and $\mathcal{E}_{K_{\omega'}}\supseteq\{E_1',\cdots,E_l'\}$, by Theorem \ref{thm:liliucone}, $\omega'(E_i')>0$ for all $i$. Also, note that $\mathcal{E}_{K_+}=\{E_1,\cdots,E_l\}$ by (\ref{equation:-1}) and for all $i$, \[\tilde{\phi}([\omega'])(E_i)=\tilde{\phi}([\omega'])\cdot \tilde{\phi}(\text{PD}(E_i'))=[\omega']\cdot\text{PD}(E_i')=\omega'(E_i')>0.\] 
    Hence, the discussion after Theorem \ref{thm:liliucone} implies $\tilde{\phi}([\omega'])\in\mathcal{C}_{K_+}\cup\mathcal{C}_{K_-}$ so that there will be some symplectic form $\omega_X$ (different from the initial $\omega$) on $X$ in this class.

    Now, let us choose a symplectic form $\omega_{S^2}$ on $S^2$ with sufficiently large area and compare the Gromov widths of the cohomologous symplectic forms $\omega_X\oplus \omega_{S^2}$ with $\tilde{f}^*(\omega'\oplus \omega_{S^2})$. By Corollary \ref{cor:biranpacking}, \[w_G(X,\omega_X)=\sqrt{[\omega_X]^2}=\sqrt{[\tilde{\phi}([\omega'])]^2}=\sqrt{[\omega']^2}.\]
    Since $\omega_{S^2}$ has large enough area, we see that \[w_G (X\times S^2,\omega_X\oplus \omega_{S^2})\geq \sqrt{[\omega']^2}.\] On the other hand, by Theorem \ref{thm:gromov} and Proposition \ref{prop:uniruledindim6}, we also have \[w_G (X\times S^2,\tilde{f}^*(\omega'\oplus \omega_{S^2}))=w_G(X'\times S^2,\omega'\oplus \omega_{S^2})\leq (\omega'\oplus \omega_{S^2})(A)=\omega'(A)<\sqrt{[\omega']^2}.\]
    Therefore, they have different Gromov widths. 
    
    Fianlly, if these two cohomologous symplectic forms $\omega_X\oplus \omega_{S^2}$ with $\tilde{f}^*(\omega'\oplus \omega_{S^2})$ have the same first Chern classes, then $\tilde{\phi}(K_{\omega'})=K_{\omega_X}$. Recall that by Proposition \ref{prop:upperbound}, we may also assume $[\omega']\cdot K_{\omega'}<0$. Thus,\[[\omega_X]\cdot K_{\omega_X}=\tilde{\phi}([\omega'])\cdot \tilde{\phi}(K_{\omega'})=[\omega']\cdot K_{\omega'}<0. \] This will contradict with Liu-Ohta-Ono's Theorem \cite{Liu,OhtaOno} which states that the existence of such symplectic form $\omega_X$ implies the manifold $X$ is diffeomorphic to rational or ruled manifold. So their first Chern classes are also different.
\end{proof}

Note that it has been observed in \cite[Proposition 4.10]{Li08} that $\mathcal{C}_{X}=\mathcal{P}$ when $X=K3$. Indeed, every positive class even has a hyperK\"ahler representative. Therefore, in the above proof, if we take $(X',\omega')$ to be any homotopy $K3$ with $K_{\omega'}\neq 0$ and consider the diffeomorphism $f:X\times S^2\rightarrow X'\times S^2$, there will be some symplectic form $\omega_X\oplus \omega_{S^2}$ on $X\times S^2$ cohomologous to $f^*(\omega'\oplus \omega_{S^2})$. However, the Chern class for any symplectic form on K3 must vanishi. As a result, $c_1(\omega_X\oplus \omega_{S^2})$ belongs to the subspace $\ZZ[\text{PD}(X)]\subseteq H^2(X\times S^2;\ZZ)$ while $c_1(f^*(\omega'\oplus \omega_{S^2}))$ not. They must be different.
 
Finally, we remark that one can easily obtain examples of cohomologous symplectic forms with different Gromov widths and first Chern classes in higher dimensions by taking more products. For instance, consider $(X\times S^2)\times (S^2)^k$ with cohomologous symplectic forms $\omega_X\oplus \omega_{S^2}\oplus(\omega_{S^2})^k$ and $\tilde{f}^*(\omega'\oplus \omega_{S^2})\oplus (\omega_{S^2})^k$.

\bibliographystyle{amsalpha}
\bibliography{mybib}{}

\end{document}